\renewenvironment{proof}{{\emph{Proof. }}}{}
\newtheorem{myobservation}{Observation}
\titlerunning{\scriptsize On the complexity of sequentially lifting cover inequalities for the knapsack polytope}
\def\rev#1{{{#1}}}
\def\hilight#1{{{#1}}}
\def\highlight#1{{{#1}}}
\begin{document}

\title{On the complexity of sequentially lifting cover inequalities for the knapsack polytope \thanks{This work was supported by the Chinese Natural Science Foundation
		(Nos. 11631013, 11331012) and the National 973 Program of China (No. 2015CB856002).}
}

\author{Wei-Kun Chen \and {Yu-Hong Dai}${}^\ast$}

\institute{${}^\ast$ Corresponding author
\\LSEC, ICMSEC,
Academy of Mathematics and Systems Science, Chinese Academy of Sciences, Beijing, China \and School of Mathematical Sciences, University of Chinese Academy of Sciences, Beijing, China. 
\email{cwk@lsec.cc.ac.cn \and dyh@lsec.cc.ac.cn}
}


\date{\today}

\maketitle

\begin{abstract}
	The well-known \highlight{sequentially} {\text{lifted cover inequality}} is widely employed in solving mixed integer programs. However, it is still an open question whether \highlight{a sequentially} lifted cover inequality \rev{can} be computed in polynomial time for a given minimal cover (Gu, Nemhauser, and Savelsbergh, INFORMS J. Comput., 26: 117--123, 1999). We show that this problem is $\mathcal{NP}$-hard, thus giving a negative answer to the question. 
	\keywords{Integer programming \and \highlight{Sequentially} lifted cover inequality \and Complexity \and Lifting problem}
	\subclass{90C11\and90C27}
\end{abstract}
\section{Introduction}
\label{sec-introduction}

The {\emph{lifted cover inequality}} (LCI) is \rev{a} well-known cutting plane for mixed integer programs. 
\highlight{Given \rev{the so-called cover inequality}, in order to obtain an LCI, we may use
the {\textit{lifting} technique}. Using different lifting procedures, several types of LCIs 
have been studied in the literatures, see \cite{Balas1975,Wolsey1975,Hammer1975,Balas1978a,Hartvigsen1992,Gu2000,Atamturk2005a,Letchford2019}. In this paper, we are 
concentrated on the {\textit{sequential}} LCI, that is, 
the variables are sequentially lifted one at a time.} 
The sequential LCI was first studied in \highlight{\cite{Padberg1975a,Wolsey1975}}. Its effectiveness on using as \rev{a} cutting plane was demonstrated in \cite{Crowder1983}, see also \cite{VanRoy1987,Hoffman1991a,Gu1998,Wolter2006,Kaparis2010a}.  
To lift each variable, a knapsack problem is required to be solved to compute the lifting coefficient. Under certain conditions, the \highlight{sequential} LCI can be computed in polynomial time, see \cite{Zemel1989,Nemhauser1988,Gu1995}. \rev{In general, however,} the complexity of computing \highlight{a sequential} LCI is still unknown. \rev{This was explicitly mentioned in \cite{Gu1999} as an open question.}
\vspace{0.2cm}

``We show that this dynamic programming algorithm may take exponential time to compute a sequential LCI that is not simple. It is still an open question whether an arbitrary LCI can be computed in polynomial time for a given minimal cover $ C $."
\vspace{0.2cm}

\rev{The above question was} also cited as an open question in \cite{Gu1995,VanHoesel2002,Atamturk2002c,Gupta2005}. We will give a
negative answer to the question by showing that the problem of computing a \highlight{sequential} LCI is $ \mathcal{NP} $-hard. Thus, unless $ \mathcal{P}=\mathcal{NP} $, there exists no polynomial time algorithm to compute \highlight{a sequential} LCI.

\rev{This paper is organized as follows. In Section \ref{sec-clci}, we review how to compute \highlight{a sequential} LCI.
In Section \ref{sec-example}, we \rev{describe} the elegant example by Gu \cite{Gu1995}, which provides exponential lifting coefficients. The main result is given in the last section, which shows the $\mathcal{NP} $-hardness of the problem of computings \highlight{a sequential} LCI. }
\section{Computing \rev{a} \highlight{sequentially} lifted cover inequality}
\label{sec-clci}
Consider the knapsack set $X : = \{ x \in \mathbb{B}^n \, : \, a^Tx  \leq b  \}$,
where $ b\in\mathbb{Z}_+$ and $a=(a_1,\cdots,a_n)^T\in\mathbb{Z}_{+}^{n}$ are given.
A subset $ C \subseteq N := \{ 1,\ldots,n \}$ is called a {\textit{cover}} of $ X $ if
the sum of the items $a_i$'s over $C$ exceeds the {\emph{knapsack capacity}} $b$; i.e.,
$ \sum_{i\in C}a_i > b$. A cover $C$ is a {\textit{minimal cover}} if and only if
\begin{equation*}
	\sum_{i \in C \backslash\{ j \}} a_i \leq b\quad \text{for all } j \in C.
\end{equation*}

For any subsets $ N_0 $ and $ N_1 $ of $ N$ with $ N_0 \cap N_1 = \varnothing $, denote $ X(N_0, N_1) $ be the following restriction set of $X$,
\begin{equation*}
X(N_0, N_1) = X \cap \{ x \in \mathbb{B}^n \,:\, x_i = 0\ \text{for } i \in N_0; \ x_i =1\ \text{for } i \in N_1 \}.
\end{equation*}
It is easy to see that, for each cover $C$, the {\textit{cover inequality}}
\begin{equation}
	\label{cover}
	\sum_{i \in C} x_i \leq c -1
\end{equation}
is valid for $  X(N \backslash C, \varnothing) $, where $ c := |C| $ is the cardinality  of $ C $.
The cover inequality \eqref{cover} is facet defining for $\text{conv}(X(N \backslash C, \varnothing))$, which is the convex hull of $ X(N\backslash C, \varnothing)$, if and only if $C$ is a minimal cover (see for example \hilight{\cite{Conforti2014}}).

We can consider to fix some variables to be ones as well. Assume that $(C, N_0, N_1)$ is a partition of $ N $ and denote
$\bar{b}= b - \sum_{i\in N_1} a_i$. In this case, the inequality \eqref{cover} is facet defining for $ \text{conv}(X(N_0,N_1)) $
if and only if $C$ is a minimal cover of $ X(N_0, N_1) $; i.e.,
\begin{eqnarray*}
	\sum_{i\in C}a_i > \bar{b}; \quad& &   \sum_{i \in C \backslash\{ j \}} a_i \leq \bar{b} \quad \text{for all } j \in C.
\end{eqnarray*}
Throughout this paper, we shall assume that $(C, N_0, N_1)$ be a partition of $ N $ and $C$ is a minimal cover of $ X(N_0, N_1) $.

In general, however, the inequality \eqref{cover} may not be valid for $ X $ if $ N_1 \neq \varnothing $. Furthermore, even if
$ N_1 = \varnothing $, such an inequality may not represent a facet of $ \text{conv}(X) $.
To obtain a strong valid inequality, we can lift the variables in $ N_0  \cup N_1 $ one at a time by solving an optimization problem sequentially.
More precisely, let $ l_1,\ldots,l_{n-c} $ be a lifting sequence such that $ N_0 \cup N_1 = \{ l_1,\ldots,l_{n-c} \}$ and
\begin{equation*}
	\sum_{i\in C} x_i + \sum_{i \in N'_0} \alpha_i x_i + \sum_{i\in N'_1} \beta_i x_i  \leq c -1 + \sum_{i \in N'_1}\beta_i
\end{equation*}
be the inequality obtained so far, where $ N'_0 \subseteq N_0 $, $ N'_1 \subseteq N_1 $ and $ N'_0 \cup N'_1 = \{ l_1,\ldots,l_j \} $. To lift the variable $ x_{l_{j+1}} $, we need to solve a knapsack subproblem depending on whether $l_{j+1}$ belongs to $N_0$ or $N_1$. 
\highlight{We follow \cite{Gu1998} in referring to lifting the variable $ x_{l_{j+1}} $
as {\textit{up-lifting}} if $ l_{j+1} \in N_0 $ and {\textit{down-lifting}} if 
		$ l_{j+1} \in N_1 $.} 

\begin{itemize}
	\item [(i)] {\textit{Up-lifting}}. If $ l_{j+1} \in N_0 $, compute the lifting coefficient $ \alpha_{l_{j+1}} $ by solving the lifting problem
\begin{equation}
\label{uplift}
\begin{aligned}
\alpha_{l_{j+1}} =  & \ {\text{min}}
  & &  c - 1 + \sum_{i \in N'_1} \beta_i -  \sum_{i \in C}x_i - \sum_{i\in N'_0 }\alpha_i x_i  -\sum_{i \in N'_1} \beta_i x_i  \\
  &  \ \text{s.t.}  & &           \sum_{i \in C \cup N'_0 \cup N'_1} a_i x_i \leq \bar{b} + \sum_{i \in N'_1}a_i - a_{l_{j+1}} , \ x\in \mathbb{B}^{c+j}  .
\end{aligned}
\end{equation}
	\item [(ii)]  {\textit{Down-lifting}}. If $ l_{j+1} \in N_1 $, compute the lifting coefficient $ \hilight{\beta_{l_{j+1}}} $ by solving the lifting problem
	\begin{equation}
	\label{downlift}
	\begin{aligned}
	\beta_{l_{j+1}} =    &\  {\text{max}}
	& & \sum_{i \in C}  x_i + \sum_{i\in N'_0 }\alpha_i x_i  +\sum_{i \in N'_1} \beta_i x_i - c + 1 - \sum_{i \in N'_1} \beta_i    \\
	&  \ \text{s.t.}          & & \sum_{i \in C \cup N'_0 \cup N'_1} a_i x_i \leq \bar{b} + \sum_{i \in N'_1}a_i + a_{l_{j+1}}  , \ x\in \mathbb{B}^{c+j}.
	\end{aligned}
	\end{equation}
\end{itemize}
After having lifted all the variables, we obtain the \highlight{sequential LCI:}
\begin{equation}
	\label{lci}
	\sum_{i\in C} x_i + \sum_{i \in N_0} \alpha_i x_i + \sum_{i\in N_1} \beta_i x_i  \leq c -1 + \sum_{i \in N_1}\beta_i.
\end{equation}
\highlight{The procedure to obtain inequality of \eqref{lci} is first described implicitly in \cite{Wolsey1975}.} 
See for example \highlight{\cite{Nemhauser1988,Gu1995,Kaparis2011}} for more details about \highlight{sequential} LCI. Here we just
notice that different lifting sequences may lead to different \highlight{sequential} LCIs.

The inequality \eqref{lci} is called a non-project LCI if $ N_1 = \varnothing $ and a project LCI if $ N_1 \neq \varnothing $ (see \cite{Gu1995}). Given a lifting sequence, the non-project LCI can be computed (\cite{Zemel1989}) in the complexity of $ \mathcal{O}(c n) $, where $c=|C|$ again.
For the project LCI, if $ C \cup N_1 $ is a minimal cover of $ X $ and the lifting sequence is enforced to $\{l^0_1,\ldots,l^0_r, l^1_1,\ldots,l^1_{|N_1|},l^0_{r+1},\ldots,l^0_{|N_0|}\} $, where $ l^j_i \in N_j  $ for $ i =1,\ldots,|N_j|$ and $j=0,1$ and
$r$ is a given integer between $1$ and $|N_0|$, Gu \cite{Gu1995} proved that 
\highlight{\eqref{lci}} can be obtained in the complexity of $ \mathcal{O}(cn^3) $.
As mentioned before, however, the complexity of computing a \highlight{sequential} LCI with an arbitrary lifting sequence is still unknown.

We close this section by noting that the \highlight{sequential} LCI is invariant under scaling.
\begin{myobservation}
	\label{invariantproperty}
	Given the same partition and the lifting sequence, multiplying a positive integer to the knapsack constraint gives the same \highlight{sequential} LCI.
\end{myobservation}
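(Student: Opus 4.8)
The plan is to argue by induction along the given lifting sequence $l_1,\ldots,l_{n-c}$ that every lifting coefficient produced from the scaled data coincides with the one produced from the original data; since the final inequality \eqref{lci} is assembled from exactly these coefficients, this yields the claim. Write $\lambda \in \mathbb{Z}_+$ for the scaling factor, so that the scaled knapsack has item weights $\lambda a$ and capacity $\lambda b$. First I would record how the derived quantities transform: since $\bar b = b - \sum_{i\in N_1} a_i$, scaling gives $\lambda b - \sum_{i\in N_1}\lambda a_i = \lambda \bar b$, so the reduced capacity simply picks up the same factor $\lambda$.

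The key observation is that the objective functions of both lifting problems \eqref{uplift} and \eqref{downlift} do not involve the knapsack data $a$ or $b$ at all: they are built solely from the cardinality $c$ and the previously computed coefficients $\alpha_i$ ($i\in N'_0$) and $\beta_i$ ($i\in N'_1$). Hence, once these earlier coefficients are known to be unchanged, the objectives for the scaled and unscaled problems are literally identical, and only the feasible regions need to be compared. For the constraint I would substitute $a\mapsto \lambda a$ and $\bar b \mapsto \lambda\bar b$ into the right-hand side $\bar b + \sum_{i\in N'_1} a_i - a_{l_{j+1}}$ (up-lifting) or $\bar b + \sum_{i\in N'_1} a_i + a_{l_{j+1}}$ (down-lifting) and observe that every term on both sides carries a common factor $\lambda$; dividing through by $\lambda>0$ recovers the original inequality over $x\in\mathbb{B}^{c+j}$. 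Thus the feasible set of binary points is preserved exactly.

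With these two facts in hand the induction is immediate. The base case is the plain cover inequality \eqref{cover}, which has no lifting coefficients and does not depend on the scaling. For the inductive step, assume the coefficients for $l_1,\ldots,l_j$ agree under the two data sets; then by the above the objective and the feasible region of the $(j+1)$-st lifting problem agree, so its optimal value---the coefficient of $x_{l_{j+1}}$---agrees as well, regardless of whether $l_{j+1}\in N_0$ or $l_{j+1}\in N_1$. I do not expect a serious obstacle here; the only point requiring care is that the lifting objectives reference the coefficients of earlier variables, which is precisely why a one-shot comparison does not suffice and the statement must be threaded through an induction on the lifting order.
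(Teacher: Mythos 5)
Your argument is correct: the lifting objectives \eqref{uplift}--\eqref{downlift} involve only $c$ and the previously computed coefficients, while every term of each knapsack constraint (including $\bar b$) acquires the common factor $\lambda>0$, so the binary feasible sets and hence all optimal values are unchanged; the induction along the lifting sequence is exactly the right way to thread this through. The paper states Observation~\ref{invariantproperty} without proof, and your write-up supplies precisely the justification the authors evidently had in mind, so there is nothing to compare beyond noting that you have filled in an omitted but routine verification.
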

\section{The example by Gu (1995)}
\label{sec-example}
In this section, we describe the elegant example constructed by Gu \cite{Gu1995},
which leads the lifting coefficients to be exponential. It is related to the
following vector $f\in\mathbb{Z}^{2r+1}$, where $r$ is a given positive integer.
\begin{equation}
	\label{exampleitems}
	f_1 = 1, \, f_2 =1 , \,f_3=1, \, \text{and} \, f_i = f_{i-2}+ f_{i-1} ,\ \text{for} \ i = 4,\ldots,2r + 1.
\end{equation}
\highlight{Notice that $ f $ is analogous to the Fibonacci sequence where the only difference 
is that the first element in the Fibonacci sequence is $ 0 $.}
We give two facts on the vector $f$, which can easily be verified by induction.
\begin{myobservation}
	\label{inductionformula}
	For $ j=3,\dots,2r+1 $, it holds that $ f_{j} = \sum_{i=1}^{j-2} f_{i} $	.
\end{myobservation}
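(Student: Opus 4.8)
The plan is to prove the identity by induction on $j$, exploiting the fact that the defining recurrence $f_i = f_{i-2} + f_{i-1}$ makes the claimed partial-sum formula collapse telescopically. Since the statement is a closed-form expression for $f_j$ in terms of its predecessors, and the predecessors are exactly what the recurrence relates, induction is the natural and essentially only tool needed here.

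First I would settle the base case $j = 3$. Here the claim reads $f_3 = \sum_{i=1}^{1} f_i = f_1$, which holds at once since both sides equal $1$ by the definition in \eqref{exampleitems}. For the inductive step I would fix $j$ with $4 \le j \le 2r+1$ and assume the identity already holds at index $j-1$, which is legitimate because $j-1 \ge 3$; that is, $f_{j-1} = \sum_{i=1}^{j-3} f_i$. Since $j \ge 4$, the recurrence in \eqref{exampleitems} applies and yields
\begin{equation*}
f_j = f_{j-2} + f_{j-1} = f_{j-2} + \sum_{i=1}^{j-3} f_i = \sum_{i=1}^{j-2} f_i,
\end{equation*}
where the final equality merely reabsorbs the term $f_{j-2}$ into the sum. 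This closes the induction and establishes the claim for all $j$ in the stated range.

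The main obstacle is, honestly, negligible: this is a routine induction and the telescoping makes the inductive step immediate. The only point demanding a little care is the bookkeeping of the index ranges. The recurrence is valid only for $i \ge 4$, so the base case must be verified directly at $j = 3$ from the explicit values $f_1 = f_2 = f_3 = 1$ rather than derived from the recurrence; and one must confirm that the inductive hypothesis is invoked at index $j-1$, which remains inside the range $\{3,\dots,2r+1\}$ where the identity is asserted. Once these ranges are lined up correctly, no further work is required.
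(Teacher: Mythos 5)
Your proof is correct and follows exactly the route the paper intends: the paper merely remarks that this fact ``can easily be verified by induction,'' and your induction on $j$ with base case $j=3$ and the recurrence $f_j = f_{j-2}+f_{j-1}$ absorbing $f_{j-2}$ into the inductive sum is that verification. Nothing further is needed.
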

\begin{myobservation}
	\label{boundsf}
	For $ j=3,\dots,2r+1 $, it holds that  $ \hilight{\frac{1}{4} (\sqrt{2} -1) (\sqrt{2})^{j} \leq f_j \leq 2^j} $.
\end{myobservation}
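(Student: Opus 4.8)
The plan is to prove both inequalities simultaneously by induction on $j$, using the defining recurrence $f_j = f_{j-1} + f_{j-2}$ from \eqref{exampleitems}, which holds for every $j \geq 4$. Because this recurrence consumes the two preceding terms, I would anchor the induction on the two seed values $f_2 = 1$ and $f_3 = 1$. Note that, although the statement only claims the bounds for $j \geq 3$, the case $j = 2$ must still be verified as a base, since it feeds the step that produces $f_4 = f_2 + f_3$. Both base cases follow by direct substitution: for the upper bound, $f_2 = 1 \leq 4$ and $f_3 = 1 \leq 8$; for the lower bound, $f_2 = 1 \geq \tfrac14(\sqrt{2}-1)(\sqrt{2})^2$ and $f_3 = 1 \geq \tfrac14(\sqrt{2}-1)(\sqrt{2})^3$, both of which hold because the two right-hand sides are smaller than $1$.

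For the upper bound the inductive step is immediate. Assuming $f_{j-1} \leq 2^{j-1}$ and $f_{j-2} \leq 2^{j-2}$, one obtains
\begin{equation*}
f_j = f_{j-1} + f_{j-2} \leq 2^{j-1} + 2^{j-2} = 3\cdot 2^{j-2} \leq 4\cdot 2^{j-2} = 2^j ,
\end{equation*}
so the bound $f_j \leq 2^j$ propagates without any additional work.

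The lower bound is the part that carries the one genuine idea. Writing $g(j) := \tfrac14(\sqrt{2}-1)(\sqrt{2})^{j}$, the crux is that $g$ obeys the \emph{sub}-recurrence $g(j) \leq g(j-1) + g(j-2)$. Indeed, after dividing through by the common factor $\tfrac14(\sqrt{2}-1)(\sqrt{2})^{j-2}$, this inequality reduces exactly to $2 \leq \sqrt{2} + 1$, which is true. Hence, if $f_{j-1} \geq g(j-1)$ and $f_{j-2} \geq g(j-2)$, then $f_j = f_{j-1} + f_{j-2} \geq g(j-1) + g(j-2) \geq g(j)$, closing the induction. Conceptually, this works precisely because the base $\sqrt{2}$ is chosen below the golden ratio $\tfrac{1+\sqrt{5}}{2}$ governing the true growth of $f$ --- equivalently, $\sqrt 2$ satisfies $t^2 \leq t+1$ --- so the geometric sequence $(\sqrt{2})^j$ is dominated by the Fibonacci-type sequence $f_j$.

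I expect no serious obstacle, as this is a routine two-term induction. The only points that demand care are, first, handling the seed $f_3 = 1$ correctly: the pure recurrence \emph{fails} at $j = 3$ since $f_1 + f_2 = 2 \neq f_3$, so $j = 2$ and $j = 3$ must both be treated as base cases rather than derived; and second, verifying the single scalar inequality $2 \leq \sqrt{2}+1$, which is exactly what calibrates the base $\sqrt{2}$ and the constant $\tfrac14(\sqrt{2}-1)$ to make the lower bound self-propagating.
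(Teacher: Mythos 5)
Your proof is correct, and it matches the paper's approach: the paper simply asserts that both bounds "can easily be verified by induction," and your two-term induction (with $j=2,3$ as base cases, the trivial step for the upper bound, and the calibration $2 \leq \sqrt{2}+1$ for the lower bound) is exactly that verification carried out in full.
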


Consider the knapsack set $ X $ with $ 2r+1 $ variables, where the coefficients of the knapsack constraint
are $ f_1,\ldots,f_{2r+1}$ in \eqref{exampleitems} and the associated knapsack capacity is $ b = \sum_{i=1}^{2r}f_i$.
Consider the partition $ (C,N_0, N_1) $ of $ \{1,\ldots,2r+1 \} $ with $ C=\{1,2\} $, $ N_0=\{ 4, 6,8,\ldots 2r \} $, and $ N_1 = \{ 3,5,7,\ldots, 2r+1 \} $. Since $f_1 =1 $, $ f_2=1 $, and
\begin{eqnarray*}
	\bar{b}&=&b-\sum_{i\in N_1}f_i  =b-\sum_{i=1}^{r}f_{2i+1} =b-f_3 - \sum_{i=2}^r (f_{2i-1} + f_{2i})\\
	&=& b-f_3 - \sum_{i=3}^{2r}f_i = 1,
\end{eqnarray*}
we know that $ C $ is a minimal cover of $ X(N_0,N_1)= \{ x\in \mathbb{B}^2 \, : \, x_1 + x_2 \leq 1\} $. Now consider the lifting sequence $ \{3,4,\ldots,2r+1\}$; i.e., $ l_j = j+2 $ for $ 1 \leq j \leq 2r-1 $. The following lemma is due to \cite{Gu1995}. For completeness, we provide a proof.
\begin{lemma}[Gu 1995]
	\label{guexample}
	(i) $ \alpha_{i} = f_{i} $ for $ i \in N_0 $; (ii) $ \beta_{i} = f_{i} $ for $ i \in N_1 $.
\end{lemma}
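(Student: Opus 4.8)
The plan is to prove both statements simultaneously by induction on the lifting step, carrying along an exact description of the inequality produced so far. Because the objective of the up-lifting problem \eqref{uplift} is precisely the slack of the current inequality and that of the down-lifting problem \eqref{downlift} is precisely its surplus, each lifting coefficient is obtained by optimizing $\sum_{i} f_i x_i$ under a single capacity bound, so the whole computation reduces to one knapsack subproblem per step. The invariant I would maintain is that, after the variables $3,4,\dots,m$ have been lifted, the current inequality reads
\[
  \sum_{i=1}^{m} f_i x_i \ \le\ R_m, \qquad R_m=\begin{cases} f_m, & m \text{ even},\\ f_{m+1}, & m \text{ odd}.\end{cases}
\]
The left-hand side takes this compact form because $f_1=f_2=1$, so the two cover coefficients already equal $f_1,f_2$, and by the induction hypothesis every coefficient lifted so far equals $f_i$. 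The base case $m=2$ is exactly the cover inequality $x_1+x_2\le1$, i.e.\ $R_2=1=f_2$.

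For the inductive step I would use that, along the sequence $3,4,\dots,2r+1$, the even indices lie in $N_0$ (up-lifting) and the odd indices lie in $N_1$ (down-lifting). Write $S_m:=\sum_{i\in N_1,\,i\le m} f_i$, so that $R_m=\bar b+S_m=1+S_m$. When $m+1\in N_0$ (so $m$ is odd and $R_m=f_{m+1}$), the capacity in \eqref{uplift} is $\bar b+S_m-f_{m+1}=R_m-f_{m+1}=0$; since all $f_i>0$ this forces $x=0$, and minimizing the slack gives $\alpha_{m+1}=R_m=f_{m+1}$. When $m+1\in N_1$ (so $m$ is even and $R_m=f_m$), the capacity in \eqref{downlift} is $\bar b+S_m+f_{m+1}=R_m+f_{m+1}=f_m+f_{m+1}=f_{m+2}$, which by Observation \ref{inductionformula} equals the total $\sum_{i=1}^{m}f_i$; hence the all-ones vector is feasible and attains this total, and maximizing the surplus gives $\beta_{m+1}=f_{m+2}-R_m=f_{m+1}$. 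In both cases the new coefficient is $f_{m+1}$, as claimed.

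To close the induction I would update the right-hand side and check it against the formula for $R_{m+1}$. Up-lifting leaves the right-hand side of the LCI unchanged, and indeed $R_{m+1}=R_m=f_{m+1}$ is the value prescribed for the even index $m+1$; down-lifting increases the right-hand side by $\beta_{m+1}$, giving $R_{m+1}=f_m+f_{m+1}=f_{m+2}=f_{(m+1)+1}$, the value prescribed for the odd index $m+1$. Thus the invariant is restored and the induction proceeds up to $m=2r+1$, establishing $\alpha_i=f_i$ for $i\in N_0$ and $\beta_i=f_i$ for $i\in N_1$.

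The only real content, and the step I expect to be delicate, is the bookkeeping of the running right-hand side $R_m$: everything collapses to the two clean identities $R_m-f_{m+1}=0$ (up-lifting drives the residual capacity to exactly zero) and $R_m+f_{m+1}=\sum_{i=1}^{m}f_i$ (down-lifting opens the capacity to exactly the total item weight, via Observation \ref{inductionformula}). Once the exact values $R_m=f_m$ for even $m$ and $R_m=f_{m+1}$ for odd $m$ are in hand, each knapsack subproblem is solved by inspection, so the burden of the proof lies entirely in tracking $R_m$ through the Fibonacci-type recursion \eqref{exampleitems} rather than in any genuine optimization.
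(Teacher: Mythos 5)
Your proof is correct and follows essentially the same route as the paper's: an induction over the lifting sequence in which the up-lifting capacity collapses to zero (forcing $x=0$) and the down-lifting capacity equals the total item weight via Observation \ref{inductionformula} (so the all-ones vector is optimal). Your explicit invariant $R_m$ is just a cleaner packaging of the identity $\sum_{i=1}^{(j-1)/2} f_{2i+1} = f_{j+1}-1$ that the paper computes inline.
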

\begin{proof}
	We proceed by induction. At first, the lifting problem of the variable $ x_3 $ reads
	\begin{equation*}
		\begin{aligned}
			\hilight{\beta_{3}} =  & \ \hilight{\text{max}}
			& &   x_1 + x_2 -1  \\
			&  \ \text{s.t.}  & &         x_1 + x_2 \leq 2, \ x \in \mathbb{B}^2   .
		\end{aligned}
	\end{equation*}
	Hence $ \beta_{3}= 1 = f_3 $.
	Assume that $ \alpha_i = f_i $ for $ i \leq j $, $ i \in N_0 $ and $ \beta_i = f_i  $ for $ i \leq j $, $ i\in N_1 $, respectively. Now we consider lifting the variable $ x_{j+1} $. If $ {j + 1} \in N_0 $, the problem \eqref{uplift} reduces to
	\begin{equation}
		\label{exauplift}
		\begin{aligned}
			\alpha_{j+1} =  & \ {\text{min}}
			& &  1 + \sum_{i=1}^{(j-1)/2} f_{2i+1} - \sum_{i=1}^{j}f_i x_i  \\
			&  \ \text{s.t.}  & &          \sum_{i=1}^j f_i x_i  \leq 1 + \sum_{i=1}^{(j-1)/2} f_{2i+1} - f_{j+1},\ x\in \mathbb{B}^{j}  .
		\end{aligned}
	\end{equation}
	Since
	\begin{equation*}
		\sum_{i=1}^{(j-1)/2}f_{2i+1} = f_3 + \sum_{i=2}^{(j-1)/2}(f_{2i} + f_{2i-1})= f_ 3 + \sum_{i=3}^{j-1} f_i  = \sum_{i=1}^{j-1}f_i -1 = f_{j+1} -1,
	\end{equation*}
	where the last equality follows from Observation \ref{inductionformula}, the feasibility of the problem \eqref{exauplift} requires
$ x_i = 0 $, $ 1\leq i \leq j $ and hence $\alpha_{j+1} = f_{j+1} $. If $ j+1 \in N_1 $, the problem \eqref{downlift} reduces to
	\begin{equation}
		\label{exadownlift}
		\begin{aligned}
			\beta_{j+1} =    &\  {\text{max}}
			& & \sum_{i=1}^{j}f_i x_i - 1 - \sum_{i=1}^{j/2 -1} f_{2i+1}    \\
			&  \ \text{s.t.}          & & \sum_{i=1}^j f_i x_i  \leq 1 + \sum_{i=1}^{j/2-1} f_{2i+1} + f_{j+1}, \ x\in \mathbb{B}^{j}.
		\end{aligned}
	\end{equation}
 It is easy to verify that $\sum_{i=1}^j f_i  = 1 + \sum_{i=1}^{j/2-1}f_{2i+1} + f_{j+1}$. Hence the all-one vector is feasible
 and solves the problem \eqref{exadownlift}, yielding $\beta_{j+1}=f_{j+1}$. So the statement is true for $j+1$ as well. By induction, this lemma holds.
	\qed
\end{proof}

\vspace{0.2cm}
Lemma \ref{guexample} indicates that the \highlight{sequential} LCI for this specific example is
\begin{equation}
	\label{liftedinequality}
	\sum_{i=1}^{2r+1} f_i x_i \leq \sum_{i=1}^{2r}f_i.
\end{equation}
By Observation \ref{boundsf}, \hilight{the input size of this example is polynomial, but} the lifting coefficients $\{f_i\}$ are exponential with $r$.
This example by Gu will play an important role in the coming complexity analysis.
\section{$ \mathcal{NP} $-hardness of computing a \highlight{sequentially} lifted cover inequality}
\label{sec-np}
In this section, we show the $ \mathcal{NP} $-hardness of the problem of computing \highlight{a sequential} LCI.
To begin with, we give a basic property of the vector $ f $ in \eqref{exampleitems}.
\begin{lemma}
	\label{anyinteger}
	Let $ f  $ be defined as in \eqref{exampleitems}, where $r\ge 1$ is given.
    For any $ \tau \in \mathbb{Z}_+$ satisfying $ 0 \leq \tau \leq \sum_{i=1}^{2r+1} f_{i}$, there
	exists a subset $ S \subseteq \{1,\ldots,2r+1\} $ such that $ \tau = \sum_{i\in S} f_i  $.
\end{lemma}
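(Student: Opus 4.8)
The plan is to prove a slightly stronger and more natural ``stagewise completeness'' statement and then specialize. Writing $S_k := \sum_{i=1}^{k} f_i$, I would show by induction on $k$ that for every $k$ with $1 \le k \le 2r+1$, each integer $\tau$ with $0 \le \tau \le S_k$ admits a representation $\tau = \sum_{i\in S} f_i$ for some $S \subseteq \{1,\ldots,k\}$. The lemma is the case $k = 2r+1$. Strengthening the index range from the single value $2r+1$ to all intermediate $k$ is what makes the induction go through, since it lets me peel off the last item and recurse on a genuinely smaller instance.

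Before the induction I would isolate the one arithmetic fact that makes the sequence ``gap-free'':
\[
f_k \le 1 + S_{k-1} \qquad \text{for all } 2 \le k \le 2r+1.
\]
For $k \ge 3$ this is immediate from Observation \ref{inductionformula}, which gives $f_k = S_{k-2} \le S_{k-1} \le 1 + S_{k-1}$. The case $k = 2$ is the direct check $f_2 = 1 \le 2 = 1 + f_1$. This inequality is exactly the classical condition guaranteeing that no integer in $[0, S_k]$ is skipped.

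With the gap condition in hand, the induction is routine. The base case $k=1$ is trivial: $S_1 = 1$, and $\tau \in \{0,1\}$ is realized by $S = \varnothing$ or $S = \{1\}$. For the inductive step, given $\tau \in [0, S_k]$ I would split on the size of $\tau$. If $\tau \le S_{k-1}$, the inductive hypothesis applied to $\{1,\ldots,k-1\}$ already supplies a representing subset. If instead $\tau > S_{k-1}$, then since $\tau$ is an integer we have $\tau \ge S_{k-1}+1$, so the gap condition yields $\tau - f_k \ge S_{k-1}+1-f_k \ge 0$, while trivially $\tau - f_k \le S_k - f_k = S_{k-1}$. Hence $\tau - f_k$ lies in $[0,S_{k-1}]$, the inductive hypothesis produces $S' \subseteq \{1,\ldots,k-1\}$ with $\tau - f_k = \sum_{i\in S'} f_i$, and $S := S' \cup \{k\}$ represents $\tau$.

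I do not expect a genuine obstacle here; the argument is a standard completeness proof, and the only point demanding care is the boundary of the gap inequality. Concretely, Observation \ref{inductionformula} is stated only for $j \ge 3$, so the value $k = 2$ must be verified separately — and it is precisely the three leading entries $f_1 = f_2 = f_3 = 1$ (rather than the ordinary Fibonacci start) that keep the sequence complete at small values where a gap would otherwise appear. Once $f_k \le 1 + S_{k-1}$ is secured for every relevant $k$, the subset-sum completeness follows mechanically from the induction above.
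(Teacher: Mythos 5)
Your proof is correct, but it takes a genuinely different route from the paper's. The paper inducts on $r$ itself, advancing two indices at a time: it uses the exact identity $\sum_{i=1}^{2r+1} f_i = f_{2r+3}$ from Observation \ref{inductionformula} to place $\tau$ in the window $(f_{2r+3},\, 2f_{2r+3}+f_{2r+2}]$, then subtracts either $f_{2r+3}$ alone or $f_{2r+2}+f_{2r+3}$ according to a two-case rule, landing back in $[0, \sum_{i=1}^{2r+1} f_i]$. You instead run the textbook ``complete sequence'' induction on the index $k$, peeling off one element at a time and relying only on the inequality $f_k \le 1 + \sum_{i=1}^{k-1} f_i$ rather than the exact equality. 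Your version is more general (it would apply verbatim to any nondecreasing sequence satisfying that gap condition, not just this Fibonacci-like $f$) and avoids the paper's case split on whether $f_{2r+2}$ is also removed; the paper's version is more tightly tailored to the specific recurrence and reuses Observation \ref{inductionformula} directly. Your handling of the boundary cases --- checking $k=2$ separately because Observation \ref{inductionformula} starts at $j=3$, and verifying $\tau - f_k \ge 0$ via the gap inequality --- is exactly the care the argument needs, so there is no gap.
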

\begin{proof}
	We proceed by induction on $r$. The result apparently holds for $r=1$.
	Assume that the result is true for some $r\geq 1 $. To verify the result for $r+1$,
    it suffices to consider the case that
    $  \sum_{i=1}^{2r+1} f_i  <  \tau \leq \sum_{i=1}^{2(r+1)+1} f_i$.
	In fact, from Observation \ref{inductionformula}, we have that
	\begin{eqnarray*}
		\sum_{i=1}^{2r+1} f_i  = f_{2r+3},\qquad&&\sum_{i=1}^{2r+3} f_i = f_{2r+3} + f_{2r+2}+ f_{2r+3}.
	\end{eqnarray*}
	So $ f_{2r+3} < \tau \leq f_{2r+3} + f_{2r+2}+ f_{2r+3}$. Let us define $ \tau_1 $ as
	\begin{equation}
	\label{tau1}
	\tau_1 =\left\{\begin{array}{ll}
	\tau- f_{2r+2}- f_{2r+3},  & \ \text{if} \ \tau > f_{2r+2}+ f_{2r+3}; \\
	\tau - f_{2r+3}, &\ \text{if} \  \tau \leq f_{2r+2}+ f_{2r+3}.
	\end{array}\right.
	\end{equation}
	Then it is easy to see that $ \tau_1 \leq f_{2r+3} = \sum_{i=1}^{2r+1} f_i $.
	By the induction assumption, there exists $S \subseteq \{ 1,2,3,...,2r+1 \} $ such that $ \tau_1 = \sum_{i \in S} f_i $. By picking
    \highlight{one more element $ f_{2r+3} $ and the possible element $ f_{2r+2} $}, we know that there exists some subset of $\{ 1,2,3,...,2r+3\}$ such that the sum of $f_i$'s over this subset is exactly $\tau$. Thus the result holds for $r+1$. By induction, this lemma is true.
	\qed
\end{proof}
\vspace{0.2cm}

%
\noindent 
\highlight{Next, we introduce the \highlight{restricted partition problem (RPP)}, which is a variant of the partition problem \cite{GAREY1979}. Comparing to the partition problem, 
	the RPP problem restricts the total sum of all element to some specific values and 
	allows the sum of the elements in the subset equals to one more value.
The RPP problem is shown to be $ \mathcal{NP} $-hard in 
	\cite{Klabjan1998a}.}
\begin{quote}
{\bf Problem RPP}.\ Given a nonnegative integer $ m $, a finite set $ K $ \highlight{of $ k $ elements with value $ \omega_i \in \mathbb{Z}_+ $} for the $ i $-th element \highlight{and}
	$ \sum_{i\in K} \omega_i  = 2(2^{m+1} -   1) $, does there exist a subset $ T \subseteq K $ such that $ \sum_{i \in T} \omega_i = 2^{m+1} - 1 $ or $ \sum_{i \in T} \omega_i = 2^{m+1} - 2 $ ?
\end{quote}
\highlight{For convenience, define $\lambda = 2^{m+1} -1 $ and then $ \sum_{i\in K} \omega_i = 2\lambda  $. 
The RPP problem is still $ \mathcal{NP} $-hard when 
$ 1 \leq \omega_i \leq \lambda - 1 $. To see this, 
suppose there exists some $ j \in K $ such that $ \omega_j \geq \lambda $. We have the following two cases. 
\begin{itemize}
	\item [(i)]
		$ \lambda \leq \omega_j \leq \lambda+1 $. In this case, it follows that 
		$ \sum_{i \in K \backslash\{j\}} \omega_i = \lambda-1 $ or $ \sum_{i \in K \backslash\{j\}} \omega_i = \lambda  $. Thus, $ K \backslash\{j\}$ is the desired subset and the answer to the RPP problem is yes. 
	\item [(ii)]
	$ \omega_j \geq \lambda+2 $. In this case, it is easy to see that the answer to the RPP 
	problem is no. 
\end{itemize}
As we can see, both cases can be solved in polynomial time. 
Therefore, the statement follows and in the following, we assume that 
 $ 1 \leq \omega_i \leq \lambda - 1 $.
}
We are now ready to present the main result of this paper; i.e., provide an $ \mathcal{NP} $-hardness proof for computing \highlight{a sequential} LCI. The basic idea is as follows. Firstly, we adopt Gu's example (see Section \ref{sec-example}) to make the lifting coefficients exponential. Secondly, some variables fixed at zero will be lifted, where the lifting coefficients can easily be obtained. Finally, we lift a variable fixed at one, \highlight{where the objective value of corresponding lifting problem is equal to some specific value if and only if the answer to the RPP problem is yes and hence is $ \mathcal{NP} $-hard}.
\begin{theorem}
	The problem of computing \highlight{a sequential} LCI is $ \mathcal{NP} $-hard.
\end{theorem}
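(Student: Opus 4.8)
The plan is to prove $\mathcal{NP}$-hardness by a polynomial reduction from the restricted partition problem (RPP), which is $\mathcal{NP}$-hard. Given an RPP instance $(m,K,\{\omega_i\}_{i\in K})$ with $\lambda=2^{m+1}-1$, $\sum_{i\in K}\omega_i=2\lambda$ and $1\le\omega_i\le\lambda-1$, I would construct, in time polynomial in the input size, a knapsack set $X$ together with a partition $(C,N_0,N_1)$ and a fixed lifting sequence, so that one specific lifting coefficient --- the last one, obtained by a down-lift --- attains a prescribed value if and only if the answer to the RPP instance is \emph{yes}. Since computing the sequential LCI \eqref{lci} requires computing that coefficient, a polynomial-time lifting algorithm would decide RPP in polynomial time, forcing $\mathcal{P}=\mathcal{NP}$.

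The gadget is built in three layers, mirroring the outline preceding the theorem. First, I embed Gu's construction from \S\ref{sec-example}: a block of $2r+1$ variables with weights $f_1,\dots,f_{2r+1}$, with $r=\Theta(m)$ chosen (using $\sum_{i=1}^{2r+1}f_i=f_{2r+3}\ge\lambda$) large enough to represent the RPP thresholds. Lifting this block first along Gu's sequence yields, by Lemma~\ref{guexample}, coefficients equal to the weights $f_i$, and by Observation~\ref{boundsf} the whole block has polynomial input size even though the coefficients are exponential; this is exactly what makes the eventual knapsack subproblem genuinely hard rather than solvable by polynomial dynamic programming. Second, I append one variable $y_i\in N_0$ per element $i\in K$, carrying the RPP weight $\omega_i$, and up-lift them via \eqref{uplift}; these are the ``variables fixed at zero'' whose coefficients are easy to read off and which, by construction, equal their weights $\omega_i$, so that they do not corrupt the encoding. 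Third, I add a single variable $z\in N_1$ and down-lift it last via \eqref{downlift}.

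The heart of the reduction is the calibration of this final down-lift. Using Lemma~\ref{anyinteger} to express the required thresholds as sums $\sum_{i\in S}f_i$, I would choose $z$'s weight, and hence the induced budget $\bar b+\sum_{i\in N_1'}a_i+a_z$, so that at the optimum of \eqref{downlift} the Gu variables are forced into a single fixed configuration (as in the all-ones argument of Lemma~\ref{guexample}), while the variables $y_i$ remain free with total residual capacity exactly $\lambda$. Because each $y_i$ contributes weight $\omega_i$ and an equal amount $\omega_i$ to the objective, the down-lift then reduces to maximizing $\sum_{i}\omega_i y_i$ subject to $\sum_i\omega_i y_i\le\lambda$, and its optimum reaches the designated value precisely when some subset of the $\omega_i$ sums to $\lambda$ or to $\lambda-1$ --- that is, exactly when the instance is a \emph{yes}-instance of RPP, the two RPP targets corresponding to the two admissible top configurations. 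Observation~\ref{invariantproperty} lets me rescale freely while keeping $C$ a minimal cover of $X(N_0,N_1)$.

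I expect the main obstacle to be this last calibration, which must simultaneously: (i) guarantee that the forced part of the objective is constant across all feasible solutions, so that the variation of \eqref{downlift} is carried entirely by the RPP variables; (ii) ensure the earlier up-lifts of the $y_i$ really do produce the clean coefficients claimed, rather than interacting with the exponential Gu coefficients; and (iii) verify throughout that $C$ remains a minimal cover of the restriction and that the exposed residual capacity is exactly $\lambda$, so the yes/no dichotomy of RPP is reproduced without an off-by-one error. This is precisely where the two thresholds $\lambda$ and $\lambda-1$, together with the representability guaranteed by Lemma~\ref{anyinteger}, are needed.
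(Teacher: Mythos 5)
Your high-level plan coincides with the paper's: reduce from RPP, use Gu's block to produce exponential coefficients, up-lift one variable per RPP element to coefficient $\omega_i$, and make a final down-lift encode the RPP question. But the one step you defer to the end --- the ``calibration'' of the last lift --- is the step that carries the entire proof, and the specific design you commit to provably cannot work. You give each RPP variable knapsack weight $\omega_i$ equal to its intended lifting coefficient $\omega_i$, and each Gu variable weight $f_i$ equal to its coefficient $f_i$. Then in the final down-lifting problem \eqref{downlift} every variable has objective coefficient equal to its constraint coefficient, so the problem is ``maximize the subset sum subject to the subset sum not exceeding the budget.'' By Lemma~\ref{anyinteger} the Gu block alone realizes \emph{every} integer up to $\sum_{i=1}^{2r+1}f_i$, so for any residual budget the optimum is simply the budget itself (or the total weight, whichever is smaller): if the RPP variables contribute $p\le\lambda$ the Gu block tops up the remainder, and if they contribute $p>\lambda$ the Gu block backs off by $p-\lambda$. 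The optimal value is therefore the same constant for every RPP instance, and nothing ``forces the Gu variables into a single fixed configuration'' --- in a maximization with equal profit densities everywhere you cannot force that. There is also a secondary defect: even granting your reduction to $\max\{\sum_i\omega_iy_i:\sum_i\omega_iy_i\le\lambda\}$, the two RPP targets $\lambda$ and $\lambda-1$ would yield two \emph{different} optima, so a single prescribed coefficient value cannot capture the disjunction in the RPP question.

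The paper's construction supplies exactly the missing asymmetry: it scales the Gu weights by $\lambda$ (weight $\lambda f_i$, lifting coefficient $f_i$) but the RPP weights by $\lambda+1$ (weight $\omega_i(\lambda+1)$, lifting coefficient $\omega_i$), and inserts two auxiliary variables with weights $\lambda(\lambda+3)\pm1$ whose lifting coefficients come out to $\lambda+4$ and $\lambda+2$ and which must both be set to one in any candidate optimal solution. Because an RPP variable now consumes $\lambda+1$ units of capacity per unit of objective while a Gu variable consumes $\lambda$, the capacity left to the Gu block after the RPP variables take value $p$ involves a floor of the form $\lfloor(\cdots-p(\lambda+1))/\lambda\rfloor$, which loses nothing extra when $p\le\lambda-1$, exactly one unit when $p=\lambda$, and exactly two when $p\ge\lambda+1$. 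The resulting case analysis shows $\beta_n=f_{2r+1}+3\lambda+5$ precisely when some $p\in\{\lambda-1,\lambda\}$ is achievable, and $f_{2r+1}+3\lambda+4$ otherwise --- a single threshold capturing both RPP targets at once. Without some such deliberate mismatch between the knapsack weights and the lifting coefficients, the final subproblem degenerates and the reduction fails; so while your scaffolding matches the paper's, the argument as proposed has a genuine gap at its core.
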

\begin{proof}
    For an RPP instance with $m\ge 0$ and $K= \{ 2r+4,\ldots, n-1 \} $, we shall construct a problem of computing a \highlight{sequential} LCI
    in polynomial time. 
	We construct a problem of computing a \highlight{sequential} LCI as follows.
Set $ r = m + 6 $,
	$ n=2r+4 +k $, and $ b = \sum_{i=1}^{2r+1} \lambda f_i  + \lambda (3\lambda + 6 )   $, where $ f$ is defined as in \eqref{exampleitems}. Set the coefficients of the knapsack constraint as
	\begin{equation*}
	a_i =\left\{\begin{array}{ll}
	\lambda f_i ,  & \ \text{for} \ i=1,\ldots,2r+1; \\
	\lambda(\lambda+3) + 1 , &\ \text{for} \  i=2r+2; \\
	\lambda(\lambda+3) - 1  , &\ \text{for} \  i=2r+3; \\
	\omega_i(\lambda + 1)  , &\  \text{for} \  i=2r+4,\ldots,n-1; \\
\lambda ( 3\lambda + 6 + f_{2r+1}) , &\ \text{for} \  i=n.
	\end{array}\right.
	\end{equation*}
 Define the partition $ (C, N_0, N_1) $ with $ C = \{1,2\} $, $ N_0 = \{ 4,6,8,\ldots, 2r,2r+2, 2r+3,\ldots,n-1  \} $, and $ N_1 = \{3,5,7,\ldots, 2r+1, n\} $. Finally, let the lifting sequence be $ \{\hilight{3,\ldots,n}\}  $.
We shall prove that the lifting coefficient $ \hilight{\beta_n} = f_{2r+1} + 3\lambda + 5  $ if and only if the answer to the RPP instance is yes.

Before doing this, we note that the input size of this \highlight{instance} is polynomial of that of the RPP instance. To see this, let $ L $ be the input size of the RPP instance. \highlight{It follows immediately that $ k = \mathcal{O}(L) $.
We next show that the number of elements $ n $ is satisfied with $ n = \mathcal{O}(L) $.
	Since the input size of a positive integer $ t $ is $  \log_2 (t+1) $, it follows that
	\begin{equation*}
		\log_2(2^{m+2} -2+1) =  \log_2 (\sum_{i \in K} \omega_i +1 ) \leq \sum_{i\in K}\log_2(\omega_i + 1) \leq L,
	\end{equation*}
	where the first inequality follows from $\omega_i +1 \geq 2  $ for all $ i \in K $.
	Thus, we have that $ m \leq L -2 $, which further implies that $ r=m+6 = \mathcal{O}(L) $. 
	This, combined with the fact that $ k = \mathcal{O}(L) $, indicates
	$$
		n = 2r+ 1 + k =\mathcal{O}(L).
	$$
	Finally, it can be easily verified that $ a_i= \mathcal{O}(L^2) $ for $ i =1,\ldots, n $
	and $ b = \mathcal{O}(L^2)  $. This proves that the input size of the 
	constructed instance is polynomial of that of the RPP instance.}
	For preparation \highlight{of the proof}, we also verify
\begin{equation}
\begin{aligned}
f_{2r+1} & \geq & & \dfrac{1}{4} (\sqrt{2} -1) (\sqrt{2})^{2r+1}=  \dfrac{1}{4} (2-\sqrt{2}) 2^r= \dfrac{1}{4} (2-\sqrt{2}) 2^{m+6} &  \\[8pt]
& > &&  2^{m+2} + 4 = 2\lambda + 6. &  \label{lambdaf}
\end{aligned}
\end{equation}

	In the following, we consider the lifting procedure. By construction, the knapsack inequality of this instance reads
	\begin{eqnarray*}
		&&\sum_{i=1}^{2r+1} \lambda f_i x_i  + [\lambda(\lambda+3)+1] x_{2r+2} + [\lambda(\lambda+3)-1] x_{2r+3} +\sum_{i=2r+4}^{n-1} \omega_i (\lambda+1) x_i     \\
		&&\qquad + \lambda ( 3\lambda + 6 + f_{2r+1}) x_n \leq \sum_{i=1}^{2r+1} \lambda f_i  + \lambda (3\lambda + 6 ).
	\end{eqnarray*}
	Since $ a_1 = \lambda  $, $ a_2 = \lambda $, and
	\begin{equation*}
		\begin{aligned}
			b- \sum_{i\in N_1}a_i = \sum_{i=1}^{2r+1} \lambda f_i  + \lambda (3\lambda + 6 )- \sum_{i=1}^{r} \lambda f_{2i+1} -\lambda ( 3\lambda + 6 + f_{2r+1}) &   \\
			= \sum_{i=1}^{2r} \lambda f_i - \sum_{i=1}^{r}\lambda f_{2i+1} =  \sum_{i=1}^{2r} \lambda f_i  -\lambda f_3 - \sum_{i=2}^{r} \lambda (f_{2i-1}+f_{2i})= \lambda,
		\end{aligned}
	\end{equation*}
	we know that $ C $ is a minimal cover of $ X(N_0,N_1) $ with the cover inequality
	\begin{equation}
		\label{proofcover}
		x_{1}+x_2 \leq 1.
	\end{equation}	
	
\vspace*{0.3cm}
\noindent {\bf Step 1. Lifting the variables $ \boldsymbol{x_3,\ldots,x_{2r+1}} $}

\vspace*{0.2cm}\noindent Starting with the cover inequality \eqref{proofcover}, we know from Observation \ref{invariantproperty} and the inequality \eqref{liftedinequality} that, after lifting the variables $ x_3,\ldots, x_{2r+1} $, the inequality is
	\begin{equation*}
		\sum_{i=1}^{2r+1} f_i x_i \leq \sum_{i=1}^{2r} f_i.
	\end{equation*}

\vspace*{0.3cm}
\noindent {\bf Step 2. Lifting the variables $ \boldsymbol{x_{2r+2}} $ and $ \boldsymbol{x_{2r+3}} $}

\vspace*{0.2cm}\noindent We first consider the variable $ x_{2r+2} $. The associated lifting problem is

	\begin{equation}
		\label{lifting2r2}
		\begin{aligned}
			\alpha_{2r+2} =  \ & {\text{min}} &   & \sum_{i=1}^{2r} f_i  - \sum_{i=1}^{2r+1} f_i x_i                                                 \\
			                & \text{s.t.}  &   & \sum_{i=1}^{2r+1} \lambda f_i x_i  \leq \lambda \sum_{i=1}^{2r}  f_i -[\lambda(\lambda+3) +1] ,\ x \in \mathbb{B}^{2r+1}.
		\end{aligned}
	\end{equation}
	The problem \eqref{lifting2r2} is feasible at the zero vector since
	\begin{equation*}
\lambda \sum_{i=1}^{2r}  f_i -[\lambda(\lambda+3)+1] \geq \lambda (\sum_{i=1}^{2r}  f_i - \lambda- 4)  > \lambda (f_{2r+1}- \lambda - 4)> 0,
	\end{equation*}
where the last inequality follows from \eqref{lambdaf}. Let $ \bar{x} $ be the optimal solution of \eqref{lifting2r2}. Since $ f_{i} \in \mathbb{Z} $ for $ i =1,\ldots,2r+1 $, its feasibility requires
	\begin{equation*}
		\sum_{i=1}^{2r+1} f_i\bar{x}_i \leq \lfloor\frac{ \lambda \sum_{i=1}^{2r} f_i - \lambda(\lambda+3) - 1}{\lambda} \rfloor =  \sum_{i=1}^{2r} f_i  - \lambda - 4.
	\end{equation*}	
	On the other hand,  from Lemma \ref{anyinteger}, we can always find an $ \bar{x} $ such that
	\begin{equation*}
	 	\sum_{i=1}^{2r+1} f_i \bar{x}_i = \sum_{i=1}^{2r}  f_i - \lambda - 4.
	\end{equation*}
	The optimality of $ \bar{x} $ gives that $ \alpha_{2r+2} = \sum_{i=1}^{2r}  f_i - \sum_{i=1}^{2r+1} f_i \bar{x}_i = \lambda + 4 $.
	Similarly, the lifting problem of $ x_{2r+3} $ reads
		\begin{align}
			\alpha_{2r+3} = \ & {\text{min}} &   & \sum_{i=1}^{2r} f_i  - \sum_{i=1}^{2r+1} f_i x_i - (\lambda + 4)x_{2r+2}                                                    \nonumber        \\
			                & \text{s.t.}  &   & \sum_{i=1}^{2r+1} \lambda f_i x_i + [\lambda(\lambda+3) +1] x_{2r+2}  \leq \lambda \sum_{i=1}^{2r}  f_i - [\lambda(\lambda+3) -1] , \nonumber\\
			                &              &   & x \in \mathbb{B}^{2r+2}. \label{lifting2r3}
		\end{align}
	Then $ \alpha_{2r+3} = \lambda + 2 $, which is achieved at an optimal solution $ \bar{x} $ satisfying
	$ \bar{x}_{2r+2}=1 $ and $ \sum_{i=1}^{2r+1}f_i \bar{x}_i = \sum_{i=1}^{2r} f_i  - 2\lambda-6 $.
	
\vspace*{0.3cm} \noindent {\bf Step 3. Lifting the variables $ \boldsymbol{x_{2r+4},\ldots,x_{n-1}} $}

\vspace*{0.2cm}\noindent	We shall show that $ \alpha_{i} = \omega_i $ for all $ i\in K$ by induction.
	At first, consider the lifting \hilight{of} the variable $ x_{2r+4} $. This requires to solve the problem
	\begin{equation}
		\label{lifting2r4}
		\begin{aligned}
			\alpha_{2r+4}= \ & {\text{min}} &   & \sum_{i=1}^{2r} f_i  - \sum_{i=1}^{2r+1} f_i x_i - (\lambda + 4)x_{2r+2}  - (\lambda + 2)x_{2r+3}        \\
			               & \text{s.t.}  &   & \sum_{i=1}^{2r+1} \lambda f_i x_i + [\lambda(\lambda+3) +1] x_{2r+2} +  [\lambda(\lambda+3)-1] x_{2r+3} \\
			               &              &   & \leq \lambda \sum_{i=1}^{2r}  f_i - \omega_{2r+4} (\lambda + 1) , \ x \in \mathbb{B}^{2r+3}.
		\end{aligned}
	\end{equation}
	If $ \hat{x} $  is an optimal solution of the problem \eqref{lifting2r4} with $ \hat{x}_{2r+3}=1 $,  by the feasibility and \eqref{lambdaf}, we have
	\begin{equation*}
		\sum_{i=1}^{2r+1} f_{i}\hat{x}_i  + \lambda + 2 <  \sum_{i=1}^{2r} f_{i} + \lambda + 2 <\sum_{i=1}^{2r+1} f_{i}.
	\end{equation*}
	This, together with Lemma \ref{anyinteger}, indicates that we can define a new feasible point $ \bar{x} $ such that $ \bar{x}_{2r+2}= \hat{x}_{2r+2} $, $  \bar{x}_{2r+3}= 0$, and $ \sum_{i=1}^{2r+1} f_i \bar{x}_i =\sum_{i=1}^{2r+1} f_i \hat{x}_i + \lambda + 2$.
	It is easy to see that $ \bar{x} $ and $ \hat{x}$ give the same objective values. Hence, we can assume that $ x_{2r+3} = 0 $ in the problem \eqref{lifting2r4}. Furthermore, since $ \omega_{2r+4} \leq  \lambda -1 $, similar to the problem \eqref{lifting2r3},
we can show that the optimal value of \eqref{lifting2r4} is $ \alpha_{2r+4} = \omega_{2r+4}$, which is achieved at an optimal solution
 $ \bar{x} $ satisfying $\bar{x}_{2r+3}=0$, $ \bar{x}_{2r+2}=1$, and $ \sum_{i=1}^{2r+1}f_i \bar{x}_i = \sum_{i=1}^{2r} f_i  - (\lambda+4)-\omega_{2r+4}$.

 Now assume that $ \alpha_{i} = \omega_{i} $ for $ 2r+4 \leq i \leq j $ and $j<n-1$ and consider the lifting problem of $ x_{j+1} $:
\begin{spacing}{0.9}
	\begin{align}
	\alpha_{j+1}= \ & {\text{min}} &   & \sum_{i=1}^{2r} f_i  - \sum_{i=1}^{2r+1} f_i x_i - (\lambda + 4)x_{2r+2}  - (\lambda + 2)x_{2r+3} - \sum_{i=2r+4}^{j} \omega_i x_i  \nonumber      \\
	& \text{s.t.}  &   & \sum_{i=1}^{2r+1} \lambda f_i x_i + [\lambda(\lambda+3) +1] x_{2r+2} +  [\lambda(\lambda+3) -1] x_{2r+3} +   \nonumber\\
	&              &   & \sum_{i=2r+4}^{j}  \omega_i(\lambda+1)x_i  \leq \lambda \sum_{i=1}^{2r}  f_i - \omega_{j+1} (\lambda + 1) , \ x \in \mathbb{B}^{j}. \label{lifting2rj1}
	\end{align}
\end{spacing}
\noindent We claim that there exists an optimal solution $ \bar{x} $ such that $ \bar{x}_{2r+3} = \bar{x}_{2r+4}=\cdots= \bar{x}_j = 0 $. To see this, suppose that an optimal solution $\hat{x}$ is such that $ \hat{x}_t = 1 $ for some $ t \subseteq [2r+3,\,j]$. Analogously, define a new point $ \bar{x} $ with $ \hilight{\bar{x}_{i}= \hat{x}_{i}} $ for $ 2r+2 \leq i \leq j$ and $i\ne t$, $ \bar{x}_{t} = 0 $, and $ \sum_{i=1}^{2r+1} f_i \bar{x}_i =\sum_{i=1}^{2r+1} f_i \hat{x}_i + \theta_t$, where
	\begin{equation*}
	\theta_t=\left\{\begin{array}{ll}
	\lambda + 2,  &\ \text{if} \ t = 2r+3; \\
	\omega_t, &\ \text{otherwise}.
	\end{array}\right.
	\end{equation*}
	By simple calculations, $ \hilight{\bar{x}} $ is feasible to the problem \eqref{lifting2rj1} and gives the same objective value as $ \bar{x} $. Similar to the problem \hilight{\eqref{lifting2r3}}, we can verify that $ \alpha_{j+1} = \omega_{j+1}$.
Thus by induction, we have that $ \alpha_{i} = \omega_i $ for all $ i\in K$.
	
\vspace*{0.3cm} \noindent {\bf Step 4. Lifting the variable $ \boldsymbol{x_{n}} $}

\vspace*{0.2cm}\noindent	Finally, we concentrate on lifting the variable $ x_n $.
	The lifting problem is
		{
		\begin{spacing}{0.9}
		\begin{align}
			\beta_{n} = \  & {\text{max}} &   & \sum_{i=1}^{2r+1} f_i x_i + (\lambda + 4)x_{2r+2} + (\lambda + 2)x_{2r+3}  + \sum_{i=2r+4}^{n-1}  \omega_i x_i  -\sum_{i=1}^{2r} f_i\nonumber \\
			             & \text{s.t.}  &   & \sum_{i=1}^{2r+1} \lambda f_i x_i + [\lambda(\lambda+3)+1] x_{2r+2}+  [\lambda(\lambda+3)-1] x_{2r+3} +                        \nonumber    \\
			             &              &   & \sum_{i=2r+4}^{n-1}\omega_i(\lambda +1)x_i  \leq \lambda \sum_{i=1}^{2r+1}  f_i + \lambda(3\lambda +6) , \ x \in \mathbb{B}^{n-1}.	\label{liftingn}
		\end{align}
	\end{spacing}
}
\noindent For convenience, denote $ g(x) $ to be the objective function in the above problem. Consider the point $ \check{x} $ with $ \check{x}_i =1 $ for $ 2r+2\leq  i\leq n-1  $ and $ \sum_{i=1}^{2r+1} f_i \check{x}_i = \sum_{i=1}^{2r+1} f_i  - \lambda - 2$. 
By Lemma \ref{anyinteger}, such a point must exist. We can check that $ \check{x} $ is feasible to the problem \eqref{liftingn} and
$ g(\check{x}) = f_{2r+1} + 3\lambda+4$. Furthermore, for a binary vector $x$, if at least one of the two components $x_{2r+2}$ and $x_{2r+3}$ is equal to zero, we have that
$$ g(x) \leq  \sum_{i=1}^{2r+1} f_i  + \lambda + 4 + \sum_{2r+4}^{n-1} \omega_i - \sum_{i=1}^{2r}f_i  = f_{2r+1} + 3\lambda + 4. $$
Thus to seek better values for $\beta_n$, we may set $ x_{2r+2} = x_{2r+3} = 1 $. In this case, the problem \eqref{liftingn} reduces to
	%
		\begin{align}
			\beta_{n} = \ & {\text{max}} &   & \sum_{i=1}^{2r+1} f_i x_i + \sum_{i=2r+4}^{n-1}  \omega_i x_i + 2\lambda + 6 -    \sum_{i=1}^{2r} f_i    	\label{liftingn-1}           \\
			             & \text{s.t.}  &   & \sum_{i=1}^{2r+1} \lambda f_i x_i  + \sum_{i=2r+4}^{n-1}\omega_i(\lambda +1)x_i  \leq \lambda \sum_{i=1}^{2r+1}  f_i + \lambda^2,\ x \in \mathbb{B}^{n-3}. \nonumber
		\end{align}

Now assume that $\bar{x}$ is an optimal solution of \eqref{liftingn-1}. Denote $ p = \sum_{i=2r+4}^{n-1} \omega_i \bar{x}_i $. It is easy to see
that $ p \leq 2\lambda $. Consider the following four cases.
	\begin{enumerate}
		\item [(a)] $ p  \le \lambda - 2 $. In this case, the knapsack constraint in the problem \eqref{liftingn-1} is trivially satisfied and the optimality of $ \bar{x} $ implies 
		\highlight{$ \bar{x}_i = 1$ for $ i = 1,\ldots, 2r+1$}. \highlight{This further indicates} that
		$$ g(\bar{x}) = \sum_{i=1}^{2r+1} f_i + p + 2\lambda + 6 -  \sum_{i=1}^{2r} f_i = f_{2r+1} + p +  2\lambda + 6 \le f_{2r+1} + 3\lambda + 4.
		$$
		\item  [(b)]
		      $ p  = \lambda - 1 $.  Similar to the case (a), we have that 	$ g(\bar{x}) =  f_{2r+1}+3\lambda + 5  $.
		\item  [(c)]
		      $ p = \lambda $. In this case, the feasibility of $\bar{x}$ indicates that
		      $$
		      \sum_{i=1}^{2r+1} f_i \bar{x}_i  \leq  [  \lambda \sum_{i=1}^{2r+1} f_i + \lambda^2  - \lambda(\lambda+1) ]/\lambda = \sum_{i=1}^{2r+1} f_i -1.
		      $$ Furthermore, the optimality of $\bar{x}$ implies that $ \sum_{i=1}^{2r+1} f_i \bar{x}_i = \sum_{i=1}^{2r+1} f_i -1  $. Thus we
can also check that $ g(\bar{x})=f_{2r+1}+3\lambda + 5 $.
		\item  [(d)]
		      $ \lambda+1 \le p\leq 2\lambda  $. On one hand, the feasibility of $\bar{x}$ requires
		      \begin{equation}
		      	\label{ineqa}
		      	\sum_{i=1}^{2r+1} f_i \highlight{\bar{x}_i}   \leq \lfloor \frac{\lambda \sum_{i=1}^{2r+1} f_i + \lambda^2  - p(\lambda +1)}{\lambda}\rfloor = \sum_{i=1}^{2r+1} f_i  + \lambda - p - 2,
		      \end{equation}
		      where the last equality follows from \highlight{$ \lambda + 1 \leq p \leq 2 \lambda$}.
		      On the other hand, the optimality of $\bar{x}$ requires that the inequality in \eqref{ineqa} holds with equality, yielding
		      $$ g(\bar{x}) = \sum_{i=1}^{2r+1} f_i + \lambda - p - 2+ p + 2\lambda + 6 -  \sum_{i=1}^{2r} f_i =f_{2r+1} + 3\lambda + 4.
		      $$
	\end{enumerate}

To summarize, the lifting coefficient $ \beta_n = f_{2r+1} + 3\lambda + 5  $ for 
the problem of computing the \highlight{sequential} LCI constructed in the above,  if any only if $ p = \lambda  $ or $ p = \lambda -1 $, or equivalently,
the answer to the RPP instance is yes.
Since the RPP problem is $\mathcal{NP}$-hard and the construction of the companion problem is in polynomial, 
we conclude that computing \highlight{a sequential} LCI is $\mathcal{NP}$-hard. This completes the proof.
	%
	\qed
\end{proof}
\def\urlprefix{}\def\href#1#2#3#4{\ifstrequal{#2}{[link]}{}{#2\newline}}
\bibliographystyle{spmpsci}
\bibliography{ComplexityprojectiLCI.bib}

\end{document}